\def\theequation{\thesection.\@arabic \c@equation}
\def\theequation{\arabic{equation}}
\def\theequation{\thesection.\arabic{equation}}
\numberwithin{equation}{section}
\def\charac{\operatorname{char}}
\def\deg{\operatorname{deg}}
\def\ker{\operatorname{ker}}
\def\reg{\operatorname{reg}}
\newcommand{\kk}{\Bbbk}
\def\lrar{{\longrightarrow}}
\def\A{\mathbb A}
\def\P{\mathbb P}
\newcommand{\Mod}[1]{\ (\mathrm{mod}\ #1)}
\newcommand{\ncom}{\newcommand}
\ncom{\bq}{\begin{equation}}
\ncom{\eq}{\end{equation}}
\ncom{\beqn}{\begin{eqnarray*}}
\ncom{\eeqn}{\end{eqnarray*}}
\ncom{\beq}{\begin{eqnarray}}
\ncom{\eeq}{\end{eqnarray}}
\ncom{\been}{\begin{enumerate}}
\ncom{\eeen}{\end{enumerate}}
\ncom{\olin}{\overline}
\ncom{\f}{\frac}
\ncom{\rar}{\rightarrow}
\def\nno{\nonumber}
\newcommand{\p}{\mathfrak p}
\newcommand{\m}{\mathfrak m}
\theoremstyle{plain}
\newtheorem{theorem}[equation]{Theorem}
\newtheorem{corollary}[equation]{Corollary}
\newtheorem{proposition}[equation]{Proposition}
\newtheorem{lemma}[equation]{Lemma}
\theoremstyle{definition}
\newtheorem{notation}[equation]{Notation}
\newtheorem{remark}[equation]{Remark}
\ncom{\bib}{\bibitem}
\ncom{\maxi}{\underset{\underset{}{i}}{\max}}
\ncom{\limm}{\underset{{ n \to \infty}}{\lim}}
\ncom{\mprime}{\m^{\prime}}
\begin{document}
 \title[ ] {Resurgence and Castelnuovo-Mumford regularity of certain monomial curves in $\A^3$ (accepted for publication in AMV)}
 \author{Clare D'Cruz}
 \address{Chennai Mathematical Institute, Plot H1 SIPCOT IT Park, Siruseri, 
Kelambakkam 603103, Tamil Nadu, 
India
} 
\email{clare@cmi.ac.in} 

 \keywords{Resurgence, Waldschmidt Constant,  Regularity}
 \thanks{The author  was partially funded by a grant from Infosys Foundation}

 \subjclass[2010]{Primary: 13A30, 1305, 13H15, 13P10} 
 
\begin{abstract}
Let $\p$ be the defining ideal of the monomial curve  ${\mathcal C}(2q+1, 2q+1+m, 2q+1+2m)$ in  the affine space $\A_k^3$ parameterized by $(x^{2q +1}, x^{2q +1 + m}, x^{2q +1 +2 m})$ where $gcd( 2q+1,m)=1$. In this paper we compute the resurgence of $\p$, the  Waldschmidt constant  of $\p$ and the Castelnuovo-Mumford regularity of the symbolic powers of $\p$.
\end{abstract}

\maketitle
\large
\section{Introduction}
 Let    $R= \kk[x_1, x_2, x_3]$ and  $S = \kk[x]$ be a polynomial rings over a field $\kk$ of characteristic zero. 
 Let $q$ and $m$ be positive integers, $d=2q+1$ and   $\gcd (d,m)=1$.  Consider the  homomorphism  $\phi: R \lrar S$  defined by $\phi(x_i) = x^{ d + (i-1)m}$, where   $1 \leq i \leq 3$. Throughout this paper   $\p:=\p_{{\mathcal C}(d, d+m, d+2m)}= \ker(\phi)$. For $q=1$, the resurgence $\rho(\p)$, the Waldschmidt constant $\gamma(\p)$ and the Castelnuovo-Mumford regularity of the symbolic powers of $\p$  have been computed in \cite{clare-shreedevi}. In this paper we generalise these results for all $q \geq 1$. 
We also verify that certain conjectures posed in   \cite{harb-huneke} hold true for $\p$.
Before we describe our main results we will  give some background on these quantities. 
 
 For any ideal $I$ in a Noetherian ring  $A$ of positive dimension with no embedded components, the $n$-th symbolic power of $I$ is defined by
 $I^{(n)}:= \cap_{\p \in Ass(R/I)} I^n A_{\p} \cap A$. In general, the generators of $I^{(n)}$ are hard to describe.
Hence,  in order  to have a more precise   relation between symbolic powers and ordinary powers of ideals, Harbourne 
posed the following conjecture: Let $I \subseteq \kk[x_1, \ldots, x_t]$ be an homogeneous ideal. Then 
$I^{(m)}\subseteq  I^r$ if $m \geq  r(t-1) - (t-2)$ \cite[Conjecture~8.4.2]{brauer}. In the same paper, the authors give  evidence  to show that  this conjecture is true if $\charac \kk >0$. 
  Later, Bocci 
and Harbourne  introduced an asymptotic quantity called resurgence which is defined as $\rho(I) :=\sup \{  m/r |   
I^{(m)}  \not \subset I^r\}$ \cite{bocci-harbourne}. 
This 
supremum exists and in fact $1 \leq \rho(I) \leq t-1$  \cite[Lemma~2.3.2]{bocci-harbourne}. 
Since resurgence in general is hard to compute, in  \cite{bocci-harbourne}  the authors  define another invariant which they call the Waldschmidt 
 constant.  The Waldschmidt constant was first introduced by Waldschmidt  in \cite{waldschmidt}. We use the definition as in \cite{bocci-harbourne}. 
Let $\alpha(I):= \min \{ n | I_n \not = 0 \}$. The Waldschmidt   constant is defined as 
$
\gamma(I) = \limm~\f{\alpha( I^{(n)})}{n}.
$
Bocci and Harbourne showed that if $I$ is a homogenous ideal, then  $\alpha(I) / \gamma(I) \leq \rho(I)$,  and in addition if $I$ is a  zero 
dimensional subscheme  in a projective space, then
$\alpha(I) / \gamma(I) \leq \rho(I) \leq \reg (I) / \gamma(I)$, where $\reg(I)$ id the Castelnuovo-Mumford 
regularity of $I$ \cite[Theorem~1.2.1]{bocci-harbourne} .  

The resurgence  and the Waldschmidt constant has been studied in a few cases: for certain general points in $
\P^2$  \cite{bocci-harbourne-2},  smooth subschemes  
\cite{guardo}, fat linear subspaces  \cite{fatabbi},  special point configurations \cite{duminicki} and monomial ideals 
\cite{bocco-waldschmidt}.   
  
  If we put   weights on the variables $wt(x_i) = d  + (i-1)m$ for  $i=1, 2, 3$, then   from \cite[Theorem~6.8]{clare} it follows that  $\p^{(n)}$ is  a   weighted homogenous  ideal of 
height $2$.   Hence, we can define the the Waldschmidt constant $\gamma(\p)$ in the same way as in  \cite{bocci-harbourne}. 
From  \cite[Theorem~1.1]{cut-kurano}  it follows that $\limm  \reg(    (\p^{n})^{sat}   ) /n $ exists and can even be irrational \cite{cutkosky}. Moreover,
 $\reg(\p^{(n)})$  is eventually periodic  \cite[Corollary~4.9]{cut-kurano}. 
In our case $ (\p^{n})^{sat} = \p^{(n)}$. We compute $ \reg(    (\p^{n})^{sat} )$ (Theorem~\ref{final regularity}). It is clear from our result that the regularity depends on $q$ and $m$.  
 In this paper we compute the exact formula for the resurgence of $\p$  (Theorem~\ref{theorem resurgence}).

We briefly summarise the contents of this paper. In Section~2 we prove some preliminary results. In Section~3 we compute the resurgence of $\p$. We  verify that   Conjecture~2.1 and Conjecture~4.1.5 in \cite{harb-huneke} hold true for $\p$ (Corollary~\ref{conjecture 2.1}, Corollary~\ref{conjecture 4.1.5}). In Section~4 we compute the Waldschmidt constant. 
We  verify that Chudnovsky's conjecture (Proposition~\ref{chudnovsky}) holds true in our case.  In section~5 we compute the Castelnuovo-Mumford regularity of $\p^{(n)}$ for all $n \geq 1$ (Theorem~\ref{final regularity}). 

We end this paper by observing that   Theorem~1.2.1 of \cite{bocci-harbourne} holds true for $\p$, i.e.,  $\alpha(\p) / \gamma(\p) \leq \rho(\p) \leq \reg (\p) / \gamma(\p)$.

\section{Preliminaries}
In this section we prove some results which may be well known. 
\begin{lemma}
\label{f_2}
For all $q \geq 1$ and $m \geq 1$,
\been
\item
\label{generators of p}
$\p = (g_1, g_2, g_3)$ where 
\beq
\label{def of delta}
 g_1 &:=& x_1^{m+q}x_2 - x_3^{q+1}, \hspace{.2in}
  g_2  := x_1^{m+q+1} - x_2 x_3^q, \hspace{.2in}
 g_3 := x_2^2 - x_1 x_3.
\eeq
\item 
\label{definition of f_i}
Let
 $
f := - x_1^{2(m+q)+1}- x_1^{m+q-1} x_2^3 x_3^{q-1}+3 x_1^{m+q} x_2 x_3^{q}-x_3^{2q+1}.$
\been
\item
\label{x_if_2}
For all  $i=1,2,3$, $x_i f \in \p^2$.
\item
\label{f_2^i} 
For all $j=1, \ldots, q+1$, $f^j \in \p^{2j-1}$.
\item
 \label{Eqn:f_2} 
  $\p^{(2)} = \p^2 + (f)$ and  for all $k \geq 1$, 
\beq
        \p^{(2k)}
&=&  (\p^{(2)})^k        \hspace{.2in} \mbox{and}
          \hspace{.2in}
           \p^{(2k + 1)} =\p \p^{( 2k)}\hspace{.1in}.
                    \eeq  
\eeen
\eeen
\end{lemma}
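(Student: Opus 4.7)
For (1), I would verify directly that each binomial $g_i$ vanishes on the curve under $x_i \mapsto x^{d+(i-1)m}$ (a one-line weight check using $d = 2q+1$), and then invoke the Hilbert--Burch structure theorem for monomial curves in $\A^3$: the ideal $(g_1, g_2, g_3)$ is the ideal of $2\times 2$ minors of the matrix
\[
M = \begin{pmatrix} x_1 & x_2 \\ -x_2 & -x_3 \\ -x_3^q & -x_1^{m+q} \end{pmatrix},
\]
which is a height-$2$ perfect ideal equal to $\p$. The columns of $M$ give the linear syzygies
\[
x_1 g_1 = x_2 g_2 + x_3^q g_3, \quad x_2 g_1 = x_3 g_2 + x_1^{m+q} g_3, \quad x_2 g_2 = x_1 g_1 - x_3^q g_3,
\]
which drive the rest of (a). Specifically, for (2)(a) I would first use $x_2^3 = x_1 x_2 x_3 + x_2 g_3$ (coming from $g_3 = x_2^2 - x_1 x_3$) to collapse the $x_1^{m+q-1} x_2^3 x_3^{q-1}$ term in the definition of $f$, obtaining the clean decomposition
\[
f = x_3^q g_1 - x_1^{m+q} g_2 - x_1^{m+q-1} x_2 x_3^{q-1} g_3,
\]
which already shows $f \in \p$. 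Multiplying this by $x_1, x_2, x_3$ in turn and applying the three syzygies to cancel mixed terms yields
\[
x_1 f = -x_3^{q-1} g_1 g_3 - g_2^2, \quad x_2 f = -g_1 g_2 - x_1^{m+q-1} x_3^{q-1} g_3^2, \quad x_3 f = -g_1^2 + x_1^{m+q-1} g_2 g_3,
\]
each manifestly in $\p^2$.

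For (2)(b) I would induct on $j$. The case $j = 1$ is $f \in \p$, already established. Given $f^{j-1} \in \p^{2j-3}$, multiplying each identity from (a) by $f^{j-1}$ gives
\[
x_i f^j = (x_i f)\, f^{j-1} \in \p^2 \cdot \p^{2j-3} = \p^{2j-1}, \qquad i = 1, 2, 3,
\]
so $\m f^j \subseteq \p^{2j-1}$. The delicate remaining step---and what I expect to be the main obstacle---is to strip the $\m$, that is, to show $\p^{2j-1} : \m = \p^{2j-1}$ (equivalently $\m \notin \ass(R/\p^{2j-1})$) in the range $j \le q+1$. Note that $\m$ \emph{does} become associated to sufficiently high ordinary powers of $\p$ (indeed, by (c), $\p^{(2)} \ne \p^2$, so $\m \in \ass(R/\p^2)$ already), which makes the bound $j \le q+1$ sharp. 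I would approach it either by a depth or local-cohomology computation on $R/\p^{2j-1}$, or, more explicitly, by exhibiting $f^j$ as a concrete polynomial combination of products of $2j-1$ of the $g_i$'s, built by iterating the identities in (a).

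For (2)(c), the inclusion $\p^2 + (f) \subseteq \p^{(2)}$ is immediate from (a): since $x_1 f \in \p^2$ and $x_1 \notin \p$, we have $f \in \p^2 R_\p \cap R = \p^{(2)}$. For the reverse inclusion $\p^{(2)} \subseteq \p^2 + (f)$ I would compute the length of $\p^{(2)}/(\p^2 + (f))$ locally at $\p$ and show it vanishes, which reduces to a single coefficient check using the Hilbert--Burch resolution of $\p$. The identities $\p^{(2k)} = (\p^{(2)})^k$ and $\p^{(2k+1)} = \p \, \p^{(2k)}$ then follow by induction on $k$, once one knows the symbolic Rees algebra $\bigoplus_n \p^{(n)}$ is generated over $R$ by $g_1, g_2, g_3$ and $f$; the crucial input for this finite generation is precisely $f^{q+1} \in \p^{2q+1}$ from (b), the single truncating relation.
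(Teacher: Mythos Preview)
Your parts (1) and (2)(a) are essentially the paper's argument (the paper declares (1) well known and gives the same three identities for $x_if$). For (2)(c) the paper simply cites \cite[Theorem~5.9]{clare} and inducts, so there is nothing substantive to compare; your length sketch is vague (localizing $\p^{(2)}/(\p^2+(f))$ at $\p$ is trivially zero and tells you nothing---you presumably mean at $\m$), but this is not the crux.

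The genuine gap is in (2)(b). Your primary route is to prove $\m f^j\subseteq\p^{2j-1}$ and then strip the $\m$ via $\p^{2j-1}:\m=\p^{2j-1}$. This colon identity is \emph{false for every} $j\ge 2$. Since $\p$ has height $2$ in the $3$-dimensional regular ring $R$, the associated primes of $R/\p^n$ lie in $\{\p,\m\}$; and $\p^{(n)}\ne\p^n$ for all $n\ge 2$ (by (c) together with $f\notin\p^2$, or by the non-containment half of Proposition~\ref{Prop:Contain}), so $\m\in\ass(R/\p^n)$ for every $n\ge 2$, including all the odd powers $3,5,\ldots,2q+1$. You noticed $\m\in\ass(R/\p^2)$ yourself, but then suggested the odd powers in the range $j\le q+1$ might be safe; they are not. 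So the depth/colon approach cannot work, and your proof as written is incomplete.

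The paper uses precisely your fallback---an explicit expression for $f^j$---and the point is that the bound $j\le q+1$ enters through the \emph{exponents in the decomposition of $f$}, not through associated primes. From $f=x_3^qg_1-x_1^{m+q}g_2-x_1^{m+q-1}x_2x_3^{q-1}g_3$ one multiplies by $f^{j-1}$ and absorbs the $j-1$ extra factors of $f$ into the monomial coefficients using (a):
\[
x_3^q\,g_1\,f^{j-1}=x_3^{\,q-j+1}(x_3f)^{j-1}g_1\in\p^{2(j-1)}\cdot\p=\p^{2j-1},
\]
and likewise $x_1^{m+q}g_2f^{j-1}=x_1^{m+q-j+1}(x_1f)^{j-1}g_2$ and $x_1^{m+q-1}x_2x_3^{q-1}g_3f^{j-1}=x_1^{m+q-j}x_2x_3^{q-1}(x_1f)^{j-1}g_3$, both in $\p^{2j-1}$. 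The required nonnegativity of the exponents $q-j+1$ and $m+q-j$ is exactly the condition $j\le q+1$ (using $m\ge 1$). That is the missing idea.
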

\begin{proof} \eqref{generators of p} is well known.

(\ref{x_if_2}) Since
\beqn
x_1 f = -g_2^2                                        -    x_3^{q-1}g_1 g_3, \hspace{.2in} 
x_2 f = - x_1^{m+q-1} x_3^{q-1} g_3^2 -  g_1g_2,  \hspace{.2in} 
x_3 f =   - g_1^2                                     + x_1^{m+q-1} g_2 g_3
\eeqn
and $g_j \in \p$ for all $j=1,2,3$,  we get $x_i f \in \p^2$  for all $i=1,2,3$. 

\eqref{f_2^i}  Let $1\leq j \leq q+1$. As $f = x_3^q g_1 - x_1^{m+q} g_2+x_1^{m+q-1}x_2x_3^{q-1} g_3$, 
 \beqn
            f^{j} 
   & =& (x_3^q g_1 - x_1^{m+q} g_2+x_1^{m+q-1}x_2x_3^{q-1} g_3)f^{j-1}\\
   & =& (x_3 f)^{j-1} x_3^{q-j+1}  g_1  -  (x_1 f)^{j - 1} x_1^{m+q-j+1}  g_2   + ( x_1f)^{j-1}   x_1^{m+q-j}x_2  g_3  \\
   & \in & \p^{2(j-1)} \p     \hspace{4.2in} \mbox{[from \eqref{x_if_2}]}  \\
   &=&  \p^{2j-1}.
  \eeqn
  
(\ref{Eqn:f_2}) follows from  \cite[Theorem~5.9]{clare} and   by induction on $k$.
\end{proof}

\section{Computation of resurgence} 
In this section we compute the resurgence $\rho(\p)$. The resurgence can be computed in the following way.
Let $\rho_n(\p):=\min\{r: \p^{(n)} \nsubseteq \p^r\}.$ Then
 $$
 {\displaystyle 
 \rho(\p):=\sup\left\{\frac{n}{\rho_n(\p)}:  n \geq 1\right\}.}
 $$
We state Conjecture~4.1.1 in \cite{harb-huneke} in our context: 
 Does $\p^{(2n-1)} \subseteq \p^n$ hold true for all $n$? The following proposition proves a stronger statement. 
 
\begin{proposition}
\label{Prop:Contain}
 Let  $k \geq 0$.  Then 
 \beqn
 \rho_{k (2q+2)  + j}(\p)  
 =\begin{cases} 
 k(2q+1)  +j + 1& \mbox{ if } k \geq 1 \mbox{ and } j=0,1\\   
   k(2q+1) +j  & \mbox{ if } k \geq 0 \mbox{ and }  j=2, \ldots, 2q+1  
 \end{cases}.
 \eeqn
\end{proposition}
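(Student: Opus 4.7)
The plan is to prove both the containment $\p^{(n)} \subseteq \p^{\rho_n - 1}$ and the strict non-containment $\p^{(n)} \not\subseteq \p^{\rho_n}$ for each of the four cases of $(k,j)$ in the proposition.

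For the containment direction, I would expand $\p^{(n)}$ via Lemma~\ref{f_2}: namely $\p^{(2K)} = (\p^{(2)})^K = \sum_{i=0}^K \p^{2(K-i)} f^i$, while the odd-$n$ case reduces through $\p^{(n)} = \p \cdot \p^{(n-1)}$. Writing each index $i = a(q+1) + b$ with $0 \leq b \leq q$, Lemma~\ref{f_2} yields $f^i = (f^{q+1})^a f^b \in \p^{a(2q+1) + \max(2b-1,0)}$. A short case-by-case check over the four subcases of $(k, j)$ then shows every summand $\p^{2(K-i)} f^i$ lies in $\p^{\rho_n - 1}$, giving the containment.

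For the non-containment direction, I would exhibit an explicit witness. Given $n = k(2q+2) + j$, take
\[
w := \begin{cases} f^{k(q+1) + \lfloor j/2 \rfloor} & \text{if } j \text{ is even}, \\ g \cdot f^{k(q+1) + \lfloor j/2 \rfloor} & \text{if } j \text{ is odd, for a chosen generator } g \in \{g_1, g_2, g_3\}. \end{cases}
\]
Membership $w \in \p^{(n)}$ is immediate from the same expansion (since $f^{k(q+1)+t} \in \p^{(2k(q+1)+2t)}$ and $g \in \p$). The task reduces to showing $w \notin \p^{\rho_n}$; equivalently, the sharp identity $\mathrm{ord}_\p(f^K) = k(2q+1) + \max(2t-1, 0)$ for $K = k(q+1) + t$, $0 \leq t \leq q$, holds (not merely the $\geq$ bound already supplied by Lemma~\ref{f_2}).

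The hard part will be establishing this matching upper bound on $\mathrm{ord}_\p(f^K)$. The naive weighted-degree bound uses $\alpha(\p^r) = 2r(d+m) = r \deg_w(g_3)$ (the minimum weight coming from $g_3$), giving $f^K \notin \p^r$ whenever $K d(d+2m) < 2r(d+m)$; but this only controls a bounded range of $k$, since $(q+1)(d+2m) > 2(d+m)$ for all $q \geq 1$, so the inequality fails for large $k$. A finer structural argument is therefore needed, most naturally by tracking the initial form of $f^K$ in the associated graded ring $\mathrm{gr}_\p(R)$, or by a leading-term analysis built from the explicit decomposition $f = x_3^q g_1 - x_1^{m+q} g_2 + x_1^{m+q-1} x_2 x_3^{q-1} g_3$ from the proof of Lemma~\ref{f_2}, possibly combined with induction on $k$. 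The odd-$j$ case then follows either by a parallel witness argument or by verifying that $\bar{g} \cdot \overline{f^K} \neq 0$ in the associated graded, so that multiplication by $g$ raises the $\p$-adic order by exactly one.
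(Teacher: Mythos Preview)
Your containment argument and your choice of witnesses $w = f^{k(q+1)+\lfloor j/2\rfloor}$ or $g\cdot f^{k(q+1)+\lfloor j/2\rfloor}$ match the paper's exactly. The paper organises the containment slightly more cleanly by using the multiplicativity $\p^{(k(2q+2)+j)} = (\p^{(2q+2)})^k\,\p^{(j)}$ and applying the bound $\p^{(2j')}\subseteq\p^{2j'-1}$ for $j'=1,\ldots,q+1$ to each factor separately, rather than expanding fully and bounding each summand $\p^{2(K-i)}f^i$; but these come to the same thing.

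The gap is in the non-containment step, and you have correctly located the difficulty: the weighted-degree bound is too weak, and you leave the sharp computation of $\mathrm{ord}_\p(f^K)$ as an unproved sketch (``tracking the initial form in $\mathrm{gr}_\p(R)$'' or ``a leading-term analysis \ldots\ possibly combined with induction''). The paper bypasses all of this with one observation you are missing: \emph{reduce modulo $x_1$}. From the explicit formulas one has
\[
f \equiv -x_3^{2q+1},\qquad g_1 \equiv -x_3^{q+1},\qquad \p \equiv (x_2^2,\,x_2x_3^q,\,x_3^{q+1}) \pmod{x_1}.
\]
Thus $f^{k(q+1)+j'}$ maps to $\pm x_3^{(2q+1)(k(q+1)+j')}$ in $\kk[x_2,x_3]$, and the question ``is $w\in\p^r$?'' becomes ``is this power of $x_3$ in the $r$-th power of the monomial ideal $(x_2^2,\,x_2x_3^q,\,x_3^{q+1})$?'', which is answered by a single exponent comparison: $x_3^N\in\p^r\pmod{x_1}$ iff $N\geq r(q+1)$. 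This immediately gives all four non-containments with no associated-graded analysis or induction needed. In particular, the paper uses $g=g_1$ (not an unspecified $g\in\{g_1,g_2,g_3\}$) precisely because $g_1$ reduces to a pure power of $x_3$, keeping the check trivial.
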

\begin{proof} 
We first show that 
\beq
\label{equation-contain}
 \p^{(k (2q+2)  + j)}
 \subseteq 
 \begin{cases} 
 \p^{k(2q+1)  +j} & \mbox{ if } k \geq 1 \mbox{ and } j=0,1\\   
  \p^{ k(2q+1) +j-1}  & \mbox{ if } k \geq 0 \mbox{ and }  j=2, \ldots, 2q+1  
 \end{cases}.
 \eeq
Applying  Lemma~\ref{f_2},  (\ref{f_2^i}) and   (\ref{Eqn:f_2})  we get             
\begin{align}
 \label{containment of even symb powers}
           \p^{(2 j^{\prime})} 
&=  (\p^2+(f))^{ j^{\prime}}
=      (\sum_{i=0}^{ j^{\prime}} f^i \p^{2( j^{\prime}-i)} )
         \subseteq  \p^{2 j^{\prime}} + \sum_{i=1}^{ j^{\prime}} \p^{2i - 1} \p^{2( j^{\prime}-i)} 
        \subseteq \p^{2 j^{\prime} -1} ,&& j^{\prime}  = 1, \ldots, q+1
         \\
       \label{containment of  odd symb powers}
         \p^{(2 j^{\prime}   + 1)}
&= \p  \p^{(2 j^{\prime} )} \subseteq \p \p^{2 j^{\prime}-1} =  \p^{(2 j^{\prime})} 
       \hspace{2.0in}   \mbox{[by   (\ref{containment of even symb powers})]},  &&j^{\prime}  = 1, \ldots, q. 
 \end{align}
 Hence  (\ref{equation-contain}) is true for $k=0$ and $j = 2, \ldots, 2q+1$.

Let $k \geq 1$ and $j=0,1$. Then from Lemma~\ref{f_2}(\ref{Eqn:f_2}) and  (\ref{containment of even symb powers})  we get
\beq
\label{containment of 2q+2}
 \p^{(k (2q+2) + j)} 
= ( \p^{( 2 q+2)})^k  \p^{j}
\subseteq    \p^{  k(2q+1) + j   }  .
\eeq
Let $k \geq 1$ and  $j = 2, \ldots, 2q+1$.  Then from  Lemma~\ref{f_2}(\ref{Eqn:f_2}), (\ref{containment of even symb powers}), (\ref{containment of  odd symb powers}) and  (\ref{containment of 2q+2})  
 we get
 \beqn
 \p^{(k (2q+2) + j)} 
 =  \p^{(k (2q+2))} \p^{(j)} 
 \subseteq  \p^{ k (2q+1)}  \p^{j-1}
=\p^{k (2q+1)   + j-1}  .
\eeqn
To complete the proof of the lemma it remains to show that 
\beq
\label{Eqn:non-Contain}
{\p}^{k (2q+2)  + j}
 \not \subseteq \begin{cases} 
 \p^{k(2q+1)  +j + 1}& \mbox{ if } k \geq 1 \mbox{ and } j=0,1\\   
  \p^{ k(2q+1) +j } & \mbox{ if } k \geq 0 \mbox{ and }  j=2, \ldots, 2q+1  
 \end{cases}.
 \eeq
By  Lemma~\ref{f_2},   (\ref{generators of p}) and (\ref{Eqn:f_2}), $g_1 \in \p$ and  $f \in \p^{(2)}$. Hence  
\begin{align*}
          f^{k(q+1)} 
&\in \p^{(k(2q+2))},  & k &\geq 1 \\
         g_1 f^{k(q+1)}
&\in  \p \p^{(k(2q+2))}  = \p^{(k(2q+2) + 1)}, & k &\geq 1 \\
          f^{k(q+1)  +j^{\prime}}  
&\in \p^{(2k(q+1) + 2j^{\prime} )},  & k &\geq 0,  j^{\prime}=1, \ldots, q\\
            g_1 f^{k(q+1)  +j^{\prime}}  
&\in \p \p^{(2k(q+1) + 2j^{\prime} )} =  \p^{(2k(q+1) + 2j^{\prime}  +1)},& k &\geq 0,  j^{\prime}=1, \ldots, q. 
\end{align*}
From  (\ref{def of delta}) and Lemma~\ref{f_2}(\ref{definition of f_i}),
\begin{align}
 \label{equation-cong-f}
 f &\equiv x_3^{2q+1} \Mod {x_1}\\
  \label{equation-cong-g}
g_1 &\equiv x_3^{q+1}\Mod{x_1} \\
 \label{equation-cong-p}
\p &\equiv  (x_2^2, x_2x_3^q, x_3^{q+1})\Mod{x_1}.
\end{align}
By (\ref{equation-cong-f}), 
(\ref{equation-cong-g}) and (\ref{equation-cong-p}) we get
\begin{align*}
                 f^{k(q+1)}  
&\equiv  ( x_3^{q+1})^{k (2q+1) }  
\not \in      \p^{ k(2q+1) + 1} \Mod{x_1}, &  k \geq 1\\
                 g_1 f^{k(q+1)}  
&\equiv  (x_3^{q+1})^{(k (2q+1) +1)  } 
\not \in  \p^{ k(2q+1) + 2}\Mod{x_1}, & k \geq 1.
\end{align*}
As $   (2q+1)  (k (q+1)  + j^{\prime}  ) - (q+1) (k(2q+1) + 2j^{\prime} )= -j^{\prime} <0$, by (\ref{equation-cong-f}), 
(\ref{equation-cong-g}) and (\ref{equation-cong-p}) we get
\begin{align*}
               f^{k(q+1)  +j^{\prime}}  
&\equiv  x_3^{   (2q+1)  (k (q+1) + j^{\prime} )} 
 \not \in  \p^{ k(2q+1) + 2j^{\prime}}  \Mod{x_1}, 
&            k \geq 0, j^{\prime}=1, \ldots, q\\
               g_1 f^{k(q+1)  +j^{\prime}} 
&\equiv  x_3^{   (2q+1)  (k (q+1) + j^{\prime} ) + (q+1) } 
 \not  \in  \p^{2k(q+1) + 2j^{\prime}  +1} \Mod{x_1}, 
&            k \geq 0, j^{\prime}=1, \ldots, q.
\end{align*}
\end{proof}

We are now ready to compute the resurgence. 
\begin{theorem}
\label{theorem resurgence}
For all $q \geq 1$, 
 $\rho(\p)= \frac{2q+2}{2q+1} .$
\end{theorem}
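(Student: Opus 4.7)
The plan is to derive the resurgence directly from Proposition \ref{Prop:Contain}, which already gives the exact value of $\rho_n(\p)$ for every $n \geq 1$. Since $\rho(\p) = \sup_n n/\rho_n(\p)$, the computation reduces to a maximization over an explicit family of rational numbers.

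First, I would write any positive integer $n$ uniquely as $n = k(2q+2) + j$ with $k \geq 0$ and $0 \leq j \leq 2q+1$, and substitute the two formulas from Proposition \ref{Prop:Contain}. This partitions the ratios $n/\rho_n(\p)$ into three families:
\begin{align*}
\text{(A)} \quad & \frac{k(2q+2)}{k(2q+1)+1}, && k \geq 1 \text{ (case } j=0),\\
\text{(B)} \quad & \frac{k(2q+2)+1}{k(2q+1)+2}, && k \geq 1 \text{ (case } j=1),\\
\text{(C)} \quad & \frac{k(2q+2)+j}{k(2q+1)+j}, && k \geq 0,\ j=2,\ldots,2q+1.
\end{align*}

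Next I would establish the upper bound $n/\rho_n(\p) \leq \frac{2q+2}{2q+1}$ in each of these three cases by a single cross-multiplication. In family (A) the inequality reduces to $0 \leq 2q+2$; in family (B) it reduces to $2q+1 \leq 4q+4$; and in family (C) it reduces to $j(2q+1) \leq j(2q+2)$. All three are trivially true, so $\rho(\p) \leq \frac{2q+2}{2q+1}$.

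Finally, to show the bound is sharp, I would let $k \to \infty$ in family (A): the sequence $\frac{k(2q+2)}{k(2q+1)+1}$ converges to $\frac{2q+2}{2q+1}$ from below, so the supremum of the ratios equals this limit. Combining with the previous step gives $\rho(\p) = \frac{2q+2}{2q+1}$. There is no real obstacle here — all the difficulty is concentrated in Proposition \ref{Prop:Contain}; once the exact values of $\rho_n(\p)$ are in hand, the theorem is a short exercise in bounding rational sequences.
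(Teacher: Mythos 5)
Your proposal is correct and follows exactly the paper's route: the paper likewise writes $\rho(\p)$ as the supremum over the same three families of ratios coming from Proposition~\ref{Prop:Contain} and reads off the value $\frac{2q+2}{2q+1}$. You have merely supplied the elementary cross-multiplication checks and the limiting argument in family (A) that the paper leaves implicit.
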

\begin{proof}
By Proposition \ref{Prop:Contain}
\[
 \rho(\p)=\sup_k\left\{   \frac{k(2q+2) }{k(2q+1) +1},     \frac{k(2q+ 2)  +1}{k(2q+1) +2},   \frac{k(2q+2) + j }{k(2q+1) +j}  : j=2, \ldots, 2q+1\right\}
 =\frac{2q+2}{2q+1} 
 .
\]
\end{proof}

The following conjecture was stated for ideal of fat points  \cite[Conjecture~2.1]{harb-huneke}. We verify that the conjecture holds true for $\p$.

\begin{corollary}
\label{conjecture 2.1} For all $n \geq 1$,
${\displaystyle
 \p^{(2n)} \subseteq 
 \begin{cases}
 \m^n \p^n & \mbox{ if } q=1 \\
 \m^{2n} \p^n  & \mbox { if } q>1
 \end{cases}
}$
and 
${\displaystyle
 \p^{(2n-1)} \subseteq 
 \begin{cases}
 \m^n \p^n & \mbox{ if } q=1 \\
 \m^{2n} \p^n  & \mbox { if } q>1
 \end{cases}.
}$
\end{corollary}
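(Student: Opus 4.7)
The plan is to derive both inclusions from the description of the symbolic powers in Lemma~\ref{f_2}, combined with two auxiliary facts about $f$ and $\p$.

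First I would record two facts. From the identity $f = x_3^q g_1 - x_1^{m+q} g_2 + x_1^{m+q-1} x_2 x_3^{q-1} g_3$ used in the proof of Lemma~\ref{f_2}\,(\ref{f_2^i}), every coefficient lies in $\m^q$, so $f \in \m^q \p$. Moreover, since $g_3 = x_2^2 - x_1 x_3 \in \p$ has standard degree $2$, we have $\alpha(\p) = 2$ and hence $\p \subseteq \m^2$. Set $c = 1$ if $q = 1$ and $c = 2$ if $q \geq 2$, so that $\p \subseteq \m^c$ and $q \geq c$ in either case.

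For the even containment I would expand, using Lemma~\ref{f_2}\,(\ref{Eqn:f_2}),
\[
\p^{(2n)} \;=\; \bigl(\p^2 + (f)\bigr)^n \;\subseteq\; \sum_{i=0}^{n} f^i\, \p^{2(n-i)}.
\]
Applying $f^i \in \m^{qi}\,\p^i$ and $\p^{n-i} \subseteq \m^{c(n-i)}$ to each summand gives
\[
f^i\,\p^{2(n-i)} \;\subseteq\; \m^{qi}\,\p^i \cdot \p^n\,\m^{c(n-i)} \;=\; \m^{qi + c(n-i)}\, \p^n \;\subseteq\; \m^{cn}\,\p^n,
\]
the last inclusion holding because $q \geq c$. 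Summing over $i$ proves the even case: $\p^{(2n)} \subseteq \m^{cn}\,\p^n$.

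For the odd case, the identity $\p^{(2n-1)} = \p \cdot \p^{(2n-2)}$ from Lemma~\ref{f_2}\,(\ref{Eqn:f_2}), combined with the even bound, gives $\p^{(2n-1)} \subseteq \m^{c(n-1)}\,\p^n$. The main obstacle is improving the exponent of $\m$ from $c(n-1)$ to $cn$, i.e.\ recovering one (resp.\ two) extra power(s) of $\m$. The natural strategy is to expand $\p \cdot (\p^2 + (f))^{n-1}$ term by term and exploit the sharper relation $\m f \subseteq \p^2$ from Lemma~\ref{f_2}\,(\ref{x_if_2}), trading an ambient factor of $\m$ inside each mixed summand $\p\, f^i\, \p^{2(n-1-i)}$ for an extra factor of $\p^2$, so that the $\p$-count is preserved while two extra $\m$-factors are produced. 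Verifying that the resulting exponents rebalance to $(cn, n)$ for every admissible $i$ is the delicate step of the argument.
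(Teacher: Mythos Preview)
Your even-case argument is correct but more laborious than necessary. The paper exploits the multiplicativity $\p^{(2n)}=(\p^{(2)})^n$ from Lemma~\ref{f_2}(\ref{Eqn:f_2}) to reduce immediately to $n=1$: once one checks $\p^{(2)}=\p^2+(f)\subseteq\m^c\p$ (which is exactly your observation $f\in\m^q\p$ together with $\p^2\subseteq\m^2\p\subseteq\m^c\p$), taking $n$-th powers gives $\p^{(2n)}\subseteq(\m^c\p)^n=\m^{cn}\p^n$ in one line, without the term-by-term binomial expansion.

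For the odd case you correctly reach $\p^{(2n-1)}=\p\cdot\p^{(2n-2)}\subseteq\m^{c(n-1)}\p^n$ and then set out to close the gap to $\m^{cn}\p^n$. That gap cannot be closed: at $n=1$ the displayed inclusion reads $\p\subseteq\m^c\p$, which fails by Nakayama. The paper's own proof also stops at $\m^{c(n-1)}\p^n$ (it simply invokes $\p^{(2n-1)}=\p\,\p^{(2(n-1))}$ and the even bound, then declares the odd case done), and the subsequent Corollary~\ref{conjecture 4.1.5} uses precisely the exponent $n-1$. So the exponent $cn$ in the odd part of the statement is evidently a misprint for $c(n-1)$; your proposed refinement via $\m f\subseteq\p^2$ is chasing a false target and should be dropped.
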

\begin{proof} By Lemma~\ref{f_2}(\ref{Eqn:f_2}),  $\p^{(2n)} = (\p^{(2)})^n$. Hence    it is enough to prove the lemma for $n=1$. If $n=1$, then by Lemma~\ref{f_2}(\ref{Eqn:f_2}),
 \beqn
 \p^{(2)} = \p^2 + (f) = (g_1, g_2, g_3)\p + (x_3^q g_1 - x_1^{m+q} g_2+x_1^{m+q-1}x_2x_3^{q-1} g_3)
 \subseteq 
 \begin{cases}
 \m \p & \mbox { if } q=1\\
\m^2 \p &\mbox { if }q>1
 \end{cases}.
 \eeqn
 Hence the corollary is true for even powers.
By Lemma~\ref{f_2}(\ref{Eqn:f_2}),  $\p^{(2n-1)} = \p \p^{(2(n-1))}$. Hence  the corollary is true for odd powers.
\end{proof}

 We rephrase Conjecture~4.1.5   of \cite{harb-huneke} in our context:
  \begin{corollary}
 \label{conjecture 4.1.5}
 For all $n \geq 1$, $\p^{(2n-1)} \subseteq \m^{n-1} \p^{n}$. 
 \end{corollary}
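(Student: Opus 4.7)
The plan is to reduce the odd-power statement to the even-power statement already established in Corollary \ref{conjecture 2.1}, using the recursion $\p^{(2n-1)} = \p\,\p^{(2(n-1))}$ provided by Lemma~\ref{f_2}(\ref{Eqn:f_2}).

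First I would handle the trivial base case $n = 1$: the claim becomes $\p^{(1)} \subseteq \m^0 \p = \p$, which is automatic.

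For $n \geq 2$, my key observation is that Corollary~\ref{conjecture 2.1} in fact gives $\p^{(2)} \subseteq \m \p$ in both cases (when $q=1$ the statement is $\p^{(2)} \subseteq \m\p$; when $q > 1$, the stronger $\p^{(2)} \subseteq \m^2 \p$ trivially implies $\p^{(2)} \subseteq \m \p$). Combining the two recursions from Lemma~\ref{f_2}(\ref{Eqn:f_2}) I get
\[
\p^{(2n-1)} \;=\; \p \cdot \p^{(2(n-1))} \;=\; \p \cdot (\p^{(2)})^{n-1} \;\subseteq\; \p \cdot (\m \p)^{n-1} \;=\; \m^{n-1}\,\p^{n},
\]
which is exactly the desired containment. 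There is no real obstacle here; the entire statement is a one-line corollary of the already-proved inclusion $\p^{(2)} \subseteq \m \p$ combined with the multiplicative structure of symbolic powers given in Lemma~\ref{f_2}(\ref{Eqn:f_2}). The only thing worth pointing out explicitly is the uniform inclusion $\p^{(2)} \subseteq \m \p$ regardless of whether $q=1$ or $q>1$, so that the conclusion of Corollary~\ref{conjecture 4.1.5} does not need to be split into cases.
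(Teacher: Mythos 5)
Your proof is correct and matches the paper's: the paper simply states that the result follows from Corollary~\ref{conjecture 2.1}, and your argument is a spelled-out version of exactly that deduction (the containment $\p^{(2)} \subseteq \m\p$ established there, combined with the recursions of Lemma~\ref{f_2}(\ref{Eqn:f_2})). Your separate treatment of $n=1$ is a welcome bit of extra care, since the recursion $\p^{(2n-1)} = \p\,\p^{(2(n-1))}$ is only available for $n \geq 2$.
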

 \begin{proof}  The proof follows from Corollary~\ref{conjecture 2.1}.
 \end{proof}

\section{Waldschmidt Constant}
Put weights $d_i= wt(x_i)$ where $d_1  = d := 2q+1$, $d_2 = d+m = 2q+1+m$ and $d_3 = d+2m = 2q+1  + 2m$. With these weights, $\p^n$ and  $\p^{(n)}$ are  weighted homogenous ideals \cite{clare}. 
Hence we can define  $\alpha(\p):= \min \{ n | \p_n \not = 0 \}$. The Waldschmidt  
constant can be defined as 
$$
\gamma(\p) = \limm~\f{\alpha( \p^{(n)})}{n}.
$$
In this section we compute $\alpha(\p)$ and  $\gamma(\p)$.

\begin{theorem}
\label{theorem gamma}
For all $q \geq 1$ and $m \geq 1$,
\been
\item
\label{theorem gamma 1}
$\alpha(\p) =  2 d_2$.

\item
\label{theorem gamma 2}
$
{\displaystyle
\gamma(\p) 
= \begin{cases}
15 / 2  & \mbox{ if } q=1 \mbox{ and } m=1\\
2 d_2& \mbox{ otherwise } .
\end{cases}
}
$
\eeen
\end{theorem}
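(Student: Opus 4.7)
The plan is to exploit that $\p$ and all its symbolic powers are weighted homogeneous ideals, so that $\alpha(I)$ equals the minimum weight of a homogeneous generator: writing any weighted homogeneous $h \in I$ as $h = \sum h_i G_i$ with $G_i$ homogeneous generators and $h_i$ weighted homogeneous forces $\wt(h) \geq \min_i \wt(G_i)$, while the generator realising this minimum already lies in $I$. The rest of the proof reduces to weight comparisons.

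For (\ref{theorem gamma 1}), a direct calculation from the generators in Lemma~\ref{f_2}(\ref{generators of p}) gives $\wt(g_1) = (q+1)d_3$, $\wt(g_2) = d_2 + q d_3$, and $\wt(g_3) = 2d_2$. The differences
\[
\wt(g_1) - 2d_2 = (2q+1)(q-1) + 2mq, \qquad \wt(g_2) - 2d_2 = (2q+1)(q-1) + m(2q-1)
\]
are strictly positive for all $q, m \geq 1$, so $g_3$ is the unique minimum-weight generator of $\p$ and $\alpha(\p) = 2d_2$.

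For (\ref{theorem gamma 2}), by Lemma~\ref{f_2}(\ref{Eqn:f_2}) the ideal $\p^{(2)} = \p^2 + (f)$ is generated by $\{g_i g_j : 1 \leq i \leq j \leq 3\} \cup \{f\}$. From (\ref{theorem gamma 1}), the lightest generator of $\p^2$ is $g_3^2$ of weight $4d_2$, while $\wt(f) = d_1 d_3$ (read off from the monomial $x_3^{2q+1}$). Setting $\mu := \min(4d_2, d_1 d_3)$, the weight principle gives $\alpha(\p^{(2)}) = \mu$. The calculation
\[
d_1 d_3 - 4 d_2 = (2q+1)(2q-3) + 2m(2q-1)
\]
is nonnegative except when $(q,m) = (1,1)$, where it equals $-1$. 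Hence $\mu = 4d_2$ generically and $\mu = d_1 d_3 = 15$ when $(q,m) = (1,1)$.

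By Lemma~\ref{f_2}(\ref{Eqn:f_2}) again, $\p^{(2n)} = (\p^{(2)})^n$ and $\p^{(2n+1)} = \p \cdot \p^{(2n)}$, and the minimum-weight generator of a product of two weighted homogeneous ideals is the product of their minimum-weight generators. Thus
\[
\alpha(\p^{(2n)}) = n\mu, \qquad \alpha(\p^{(2n+1)}) = 2d_2 + n\mu,
\]
and
\[
\gamma(\p) = \limm \frac{\alpha(\p^{(n)})}{n} = \frac{\mu}{2},
\]
which is $15/2$ if $(q,m) = (1,1)$ and $2d_2$ otherwise. The main obstacle is locating the exceptional case via the sign of $d_1 d_3 - 4d_2$ and recognising that $f$ provides a strictly lighter generator of $\p^{(2)}$ precisely in that single case; everything else is weight bookkeeping built on the multiplicative structure of symbolic powers already established in Section~2.
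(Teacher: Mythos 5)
Your proposal is correct and follows essentially the same route as the paper: it uses the structure $\p^{(2)}=\p^2+(f)$, $\p^{(2n)}=(\p^{(2)})^n$, $\p^{(2n+1)}=\p\,\p^{(2n)}$ from Lemma~\ref{f_2}, identifies $g_3$ (weight $2d_2$) and $f$ (weight $d_1d_3$) as the competing minimum-weight generators, and locates the exceptional case via the sign of $d_1d_3-4d_2=(2q+1)(2q-3)+2m(2q-1)$, exactly as in the paper's computation $d(d-4)+2m(d-2)$. The only difference is cosmetic: you make the ``$\alpha$ equals minimum generator weight'' principle and its multiplicativity over products explicit, which the paper uses implicitly.
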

\begin{proof} 
By Lemma~\ref{f_2}, $\p = (g_1, g_2, g_3)$, $\p^{(2n)} = (\p^{2} + f)^n$ and $\p^{(2n+1)} = \p \p^{(2n)}$
 where $g_1$, $g_2$, $g_3$ are  defined in (\ref{def of delta}) and $f$ is defined in Lemma~\ref{f_2}(\ref{definition of f_i}).  Hence,  
 $\deg(g_1) = (q+1) d_3 =(q+1)(d+2m)$, 
 $\deg(g_2) = d(m+q+1)$ and $\deg(g_3) = 2 d_2 = 2(d+m)$. 
This gives $\deg(g_3) \leq \deg(g_2) \leq \deg(g_1)$. Hence,
$\alpha(  \p^{(2n)}) = \min \{   2\deg ( g_3) n,  \deg(f) n\}$.

As $\deg(f) = d( d+2m)$, we get  $\deg(f) - 2 \deg(g_3) = d(d+2m) - 4(d+m) = d(d-4) + 2m (d-2)$,
we get, 
 $deg(f) \leq 2  \deg(g_3)$ if and only if $q=1$ and $m=1$. Hence 
\beq
\label{waldschmidt even}
\alpha( \p^{(2n)} )&= &
\begin{cases}
 \deg(f) n  = 15n  &   \hspace{.8in}\mbox{ if } q=1 \mbox{ and } m=1\\
2 \deg( g_3) n =  2(2n) d_2 & \hspace{.8in}\mbox { otherwise }
\end{cases} \\
\alpha( \p^{(2n + 1)}) \label{waldschmidt odd}
&= &
\begin{cases}
 n \deg(f) + \deg(g_3)    = 15n + 8&   \mbox{ if }  q=1 \mbox{ and } m=1\\
(2n+1)  \deg( g_3) =    2(2n+1)d_2       & \mbox { otherwise } 
\end{cases}.
\eeq
This implies that 
$
{\displaystyle
\gamma(\p) =  \limm~\f{\alpha( \p^{(n)})}{n} 
= \begin{cases}
15 / 2  &   \mbox{ if }  q=1 \mbox{ and } m=1\\
 2 d_2& \mbox { otherwise } .
\end{cases}
}$
 \end{proof}

We verify Chudnovsky's conjecture (see \cite[Remark~3.4]{harb-huneke}).
\begin{proposition}
\label{chudnovsky}
$
{\displaystyle
\f{\alpha  ( \p^{(n)})}{n} \geq \f{\alpha( \p) + 1}{2}.}
$
\end{proposition}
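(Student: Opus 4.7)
The plan is to reduce the inequality to a direct numerical verification using the explicit formulas for $\alpha(\p^{(n)})$ that were already established in the proof of Theorem~\ref{theorem gamma}. By Theorem~\ref{theorem gamma}(\ref{theorem gamma 1}), $\alpha(\p)=2d_2$, so the target bound is $(\alpha(\p)+1)/2 = d_2+1/2$. Thus the task is simply to check that $\alpha(\p^{(n)})/n \geq d_2 + 1/2$ for every $n \geq 1$, using the case-by-case expressions from (\ref{waldschmidt even}) and (\ref{waldschmidt odd}).

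First I would dispatch the generic case where $(q,m)\neq(1,1)$. Here (\ref{waldschmidt even}) gives $\alpha(\p^{(2n)})/(2n) = 2d_2$ and (\ref{waldschmidt odd}) gives $\alpha(\p^{(2n+1)})/(2n+1) = 2d_2$. In either parity, the ratio equals $2d_2$, so the inequality $2d_2 \geq d_2+1/2$ reduces to $d_2 \geq 1/2$, which is immediate since $d_2 = 2q+1+m \geq 4$.

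Next I would handle the exceptional case $q=1, m=1$, where $d_2 = 4$ and the desired bound is $\alpha(\p^{(n)})/n \geq 9/2$. From (\ref{waldschmidt even}), $\alpha(\p^{(2n)})/(2n) = 15n/(2n) = 15/2 \geq 9/2$. From (\ref{waldschmidt odd}), $\alpha(\p^{(2n+1)})/(2n+1) = (15n+8)/(2n+1)$; clearing denominators, the inequality $(15n+8)/(2n+1)\geq 9/2$ is equivalent to $30n+16 \geq 18n+9$, i.e.\ $12n+7 \geq 0$, which holds for every $n\geq 0$.

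There is no real obstacle here: the bulk of the work was already absorbed into Theorem~\ref{theorem gamma}, and Chudnovsky's inequality follows by a short arithmetic check in the two cases. The only point requiring mild care is the odd-power subcase when $q=m=1$, where the ratio is not constant in $n$ and one must verify the bound uniformly rather than appealing to the limit.
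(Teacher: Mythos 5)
Your proof is correct and follows essentially the same route as the paper: both reduce Chudnovsky's inequality to a case-by-case arithmetic check using $\alpha(\p)=2d_2$ and the explicit formulas (\ref{waldschmidt even}) and (\ref{waldschmidt odd}) from the proof of Theorem~\ref{theorem gamma}, splitting into the generic case and the exceptional case $q=m=1$. The only difference is cosmetic bookkeeping in the odd subcase, where you clear denominators instead of rewriting the ratio as $\tfrac{15}{2}+\tfrac{1}{2(2r+1)}$.
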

\begin{proof}
If $q=1$ and $m=1$, then by Theorem~\ref{theorem gamma}(\ref{theorem gamma 1}), $\f{\alpha(\p)  + 1}{2}= (8+1)/2  = 9/2$ and 
\beqn
\f{\alpha  ( \p^{(n)})}{n} 
= \begin{cases}
\f{(15/2)2r}{2r}  = \f{15}{2} \geq \f{9}{2}& \mbox{ if $n=2r$ }\\
\f{ (15(2r+1)  + 1)/2} {2r+1}  = \f{15}{2} + \f{1}{2(2r+1)} \geq \f{9}{2}& \mbox { if $n=2r+1$ } 
\end{cases}.
\eeqn
If either $q \not =1$ or $m \not = 1$, then by (\ref{waldschmidt even}), (\ref{waldschmidt odd}) and Theorem~\ref{theorem gamma}(\ref{theorem gamma 1}), for all $n \geq 1$
\beqn
\f{\alpha  ( \p^{(n)})}{n}  
= 2d_2 \geq \f{ 2 d_2 + 1}{2} = \f{\alpha(\p) + 1}{2}.
\eeqn
\end{proof}

\section{Regularity}
Recall $d=d_1  = 2q+1$, $d_2 = d+m$ and 
$d_3 = 2q+1 + 2m$. 
We begin with some basic results comparing $\p^{(n)}$ and $I_nT$ where $T = \kk[x_2, x_3] \cong R / (x_1)$.  
Let
\beq
\label{definition of Ji}
J_1 &:= &   \{x_2^2, x_2x_3^q, x_3^{q+1}\}, \hspace{.2in}
J_2 := \{x_3^{d}\}.
\eeq. 
\begin{notation}
If $A_1, \ldots, A_n$  are $n$ sets of monomials we define the set $A_1 \cdots A_n$ by
$A_1\cdots A_n := \{a_1 \cdots a_n : a_i \in A_i \}$.
\end{notation}
With the above notation
\beq
\label{equation of In}
I_n
&:=&  
\sum_{a_1 + 2 a_2 = n} J_{1}^{a_1} J_{2}^{a_{2} }.
\eeq
As an immediate consequence of Theorem~5.9 in \cite{clare} we have:
\begin{lemma}
\label{ideals in tprime}
For all $n \geq 1$,  $\p^{(n)}R + (x_1) = I_nR + (x_1)$,  $\p^{(n)} T = I_n T$,  $I_{2n} T = (I_2T)^n $ and $I_{2n+1} T = (I_2T) (I_{2n} T)$. 
\end{lemma}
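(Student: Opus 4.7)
\emph{Proof plan.} The first two identities rest on Theorem~5.9 of \cite{clare}, which furnishes an explicit generating set for $\p^{(n)}$ in terms of the elements $g_1, g_2, g_3$, and $f$ of Lemma~\ref{f_2}. Modulo $x_1$ one has $g_1 \equiv -x_3^{q+1}$, $g_2 \equiv -x_2 x_3^q$, $g_3 \equiv x_2^2$, and $f \equiv -x_3^d$, so every generator of $\p^{(n)}$ coming from that theorem reduces, up to sign, to a product $\mu\nu$ with $\mu \in J_1^{a_1}$ and $\nu \in J_2^{a_2}$ for some nonnegative integers $a_1, a_2$ with $a_1 + 2a_2 = n$. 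Matching this with the generating set of $I_n$ given in (\ref{equation of In}) yields the first assertion $\p^{(n)}R + (x_1) = I_nR + (x_1)$, and passing to $T = R/(x_1)$ immediately gives the second assertion $\p^{(n)}T = I_nT$.

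The third and fourth identities are then formal consequences of Lemma~\ref{f_2}(\ref{Eqn:f_2}): reduction modulo $x_1$ is a surjective ring homomorphism, hence commutes with products of ideals, giving
\[
I_{2n}T = \p^{(2n)}T = \bigl((\p^{(2)})^n\bigr)T = (\p^{(2)}T)^n = (I_2T)^n,
\]
and, starting from $\p^{(2n+1)} = \p\cdot\p^{(2n)}$, an entirely analogous calculation produces the stated multiplicative recursion for $I_{2n+1}T$ (with the factor arising from $\p T$ identified via the $n=1$ case of the second assertion).

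The main obstacle lies in the first step: one has to unpack Theorem~5.9 of \cite{clare} and verify term-by-term that its generating set for $\p^{(n)}$ reduces modulo $x_1$ onto exactly the union $\bigcup_{a_1 + 2a_2 = n} J_1^{a_1}J_2^{a_2}$, with no extra or missing monomials. Once this translation is in place, the remainder of the lemma follows from the bookkeeping above together with Lemma~\ref{f_2}(\ref{Eqn:f_2}).
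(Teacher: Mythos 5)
Your plan is essentially the paper's argument: the paper offers no proof beyond declaring the lemma ``an immediate consequence of Theorem~5.9 in \cite{clare}'', and your reduction of that theorem's generating set modulo $x_1$ (via $g_1\equiv -x_3^{q+1}$, $g_2\equiv -x_2x_3^{q}$, $g_3\equiv x_2^{2}$, $f\equiv -x_3^{d}$) combined with Lemma~\ref{f_2}(\ref{Eqn:f_2}) is exactly the intended unpacking. One remark: your derivation of the last identity actually produces $I_{2n+1}T=(I_1T)(I_{2n}T)$ rather than the printed $(I_2T)(I_{2n}T)$ (which, by the third identity, would force $I_{2n+1}T=I_{2n+2}T$); the version with $I_1T$ is what is in fact used later in Corollary~\ref{regularity modulo x_2^2}, so the statement contains a typo and your computation is the correct one.
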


 \begin{lemma}
 \label{reg comparision}
For all $n \geq 1$,  $\reg( R/ \p^{(n)}   )  =  \reg(T/ I_nT) $. 
  \end{lemma}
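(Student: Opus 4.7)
The plan is to exploit that $x_1$ is a non-zero-divisor on $R/\p^{(n)}$ and to transfer a minimal graded free resolution across the surjection $R \twoheadrightarrow T = R/(x_1)$.

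First I would verify the non-zero-divisor claim. Since $\phi(x_1) = x^{2q+1} \neq 0$, we have $x_1 \notin \p$; because $\p^{(n)}$ is $\p$-primary, $\ass(R/\p^{(n)}) = \{\p\}$, so $x_1$ avoids every associated prime and acts as a non-zero-divisor on $R/\p^{(n)}$. Combining this with Lemma~\ref{ideals in tprime}, which gives $\p^{(n)} R + (x_1) = I_n R + (x_1)$, yields the graded isomorphism $(R/\p^{(n)}) \otimes_R T \cong T/I_n T$.

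Next, let $F_\bullet \to R/\p^{(n)}$ be a minimal graded free resolution over $R$. Because $x_1$ is a non-zero-divisor on $R/\p^{(n)}$, we have $\operatorname{Tor}^R_{>0}(R/\p^{(n)}, T) = 0$, so $F_\bullet \otimes_R T$ is a graded free resolution of $T/I_n T$ over $T$. This resolution remains minimal: the entries of the differentials of $F_\bullet$ lie in $\m = (x_1, x_2, x_3)$ by minimality over $R$, and after reducing modulo $x_1$ they lie in $\m_T = (x_2, x_3)T$. Consequently the graded Betti numbers match, $\beta^R_{i,j}(R/\p^{(n)}) = \beta^T_{i,j}(T/I_n T)$ for all $i, j$, and since the Castelnuovo--Mumford regularity is determined by the shifts appearing in the minimal free resolution, the equality $\reg(R/\p^{(n)}) = \reg(T/I_n T)$ follows.

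The only mildly subtle step is the minimality of the tensored complex; this reduces to the observation that the image of $\m$ under $R \twoheadrightarrow T$ is exactly $\m_T$, so the differentials do not acquire units after the base change. Everything else is standard once the non-zero-divisor property is in hand.
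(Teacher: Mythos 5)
Your argument is correct and reaches the same conclusion from the same two key inputs as the paper (that $x_1$ is a nonzerodivisor on $R/\p^{(n)}$, and that $\p^{(n)}+(x_1)=I_nR+(x_1)$ from Lemma~\ref{ideals in tprime}), but the mechanism for transferring regularity is genuinely different. The paper stays at the level of regularity itself: it applies Chardin's formula $\reg(M/fM)=\reg(M)+\deg f-1$ twice --- once to $R/\p^{(n)}$ and once to $R/I_nR$, both modulo the weight-$d_1$ form $x_1$ --- and the two shifts of $d_1-1$ cancel. You instead work at the level of resolutions: Tor-vanishing gives that $F_\bullet\otimes_R T$ is a resolution, minimality survives because $\m$ maps onto $\m_T$, hence the graded Betti tables of $R/\p^{(n)}$ over $R$ and of $T/I_nT$ over $T$ coincide. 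Your route has the merit of making the cancellation of the two $(d_1-1)$ shifts transparent rather than coincidental. The one point you should make explicit is which notion of regularity you are invoking in the final step: since the grading here is by the weights $d_i$ rather than the standard grading, the identity $\reg(M)=\max\{j-i:\beta_{i,j}(M)\neq 0\}$ is not the same as the local cohomology characterization (for instance $\reg$ of the weighted polynomial ring itself is not $0$ under the latter), and equal Betti tables over rings with different numbers of variables do not force equal local-cohomology regularities. Your conclusion is the one the paper intends, because the paper itself reads off regularity from the twists in the Hilbert--Burch resolutions in Corollary~\ref{regularity modulo x_2^2} and Lemma~\ref{lemma mod x_3}, i.e.\ it uses exactly the Betti-number characterization your last step relies on; but a sentence fixing that convention would close the argument cleanly.
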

 \begin{proof} As $x_1$ is a nonzerodivisor on $R/ \p^{(n)}$ and $T / I_n T$, 
  \beqn
  \reg \left(  \f{R}  { \p^{(n)}} \right)
&=& \reg \left(  \f{R  }{ \p^{(n)} + (x_1)}  \right) - (d_1-1)  \hspace{.6in}   \mbox{\cite[Remark~4.1]{chardin}}\\
&=& \reg    \left(  \f{R }  { I_{n}R + (x_1) }  \right)  - (d_1-1)   \hspace{.6in} \mbox{[Lemma~\ref{ideals in tprime}]}\\
&=& \reg  \left(  \f{R } { I_{n}    R }   \right) +  (d_1-1) -  (d_1-1)     \hspace{.4in} \mbox{\cite[Remark~4.1]{chardin}} \\
&=& \reg  \left(  \f{T} { I_{n}   T  }   \right)  . 
  \eeqn
\end{proof}

From Lemma~\ref{reg comparision} it follows that  we need to compute $\reg(T/ I_nT)$.
\begin{corollary}
\label{regularity modulo x_2^2}
Let  $n \geq 1$.  Then 
\beqn
\reg\left( \f{T}{I_{n}T + ( x_2^2) } \right) 
= 
\begin{cases}
\f{ d   d_3}{2}n + 2 d_2 -2& \mbox{ if } n = 2r,\\
\f{d  d_3}{2} n + d_2 + \left(\f{  -d}{2} + q+1\right) d_3 -2& \mbox{ if }  n = 2r-1    . 
\end{cases}
\eeqn
\end{corollary}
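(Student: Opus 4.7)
The plan is to reduce the computation to the regularity of a monomial quotient of $T = \kk[x_2, x_3]$, a polynomial ring in only two variables, where every ideal has a very short minimal free resolution.

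First, I will use Lemma~\ref{ideals in tprime} to obtain a compact description of $I_n T$ modulo $x_2^2$. A direct inspection of the generators of $I_2 = J_1^2 + J_2$ shows that every generator is divisible either by $x_2^2$ or by $x_3^d = x_3^{2q+1}$, giving $I_2 T + (x_2^2) = (x_2^2, x_3^d)$. Taking the $r$-th power via $I_{2r} T = (I_2 T)^r$ yields $I_{2r} T + (x_2^2) = (x_2^2, x_3^{dr})$. For odd $n = 2r-1$, I will use $I_{2r-1} T = I_1 T \cdot (I_2 T)^{r-1}$ (obtained by iterating Lemma~\ref{ideals in tprime}) together with $I_1 T = J_1 = (x_2^2, x_2 x_3^q, x_3^{q+1})$; combining these and selecting the minimal generators modulo $x_2^2$ should give
\[
 I_{2r-1} T + (x_2^2) \;=\; \bigl(x_2^2,\; x_2 x_3^{q + d(r-1)},\; x_3^{q+1+d(r-1)}\bigr).
\]

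Next, I will compute the regularity of each quotient via its minimal weighted-graded free resolution over $T$, with weights $\wt(x_2) = d_2$ and $\wt(x_3) = d_3$. For even $n = 2r$, the ideal $(x_2^2, x_3^{dr})$ is a weighted complete intersection, and its Koszul resolution immediately gives $\reg \bigl(T/(x_2^2, x_3^{dr})\bigr) = 2 d_2 + rd\, d_3 - 2$; the bound $rd\,d_3 > 2d_2$ required for this max follows from $d \geq 3$ and $m \geq 1$. For odd $n = 2r-1$, set $a := q + d(r-1)$, so the ideal becomes $(x_2^2,\, x_2 x_3^{a},\, x_3^{a+1})$. This monomial ideal in two variables admits a Hilbert--Burch resolution with exactly two minimal syzygies, obtained pairwise from adjacent generators (namely $x_3^a \cdot x_2^2 = x_2 \cdot x_2 x_3^a$ and $x_3 \cdot x_2 x_3^a = x_2 \cdot x_3^{a+1}$), while the Taylor syzygy between $x_2^2$ and $x_3^{a+1}$ is a $T$-linear combination of the other two. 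Since $d_3 > d_2$, the largest shift in the syzygy module is $d_2 + (a+1)d_3$, so $\reg = d_2 + (a+1)d_3 - 2$; substituting $a = q + d(r-1)$ and rearranging produces the asserted formula.

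The step I expect to require the most care is the first one: verifying that the minimal generators of $I_n T + (x_2^2)$ are exactly those listed, despite the fact that $I_n$ is generated by many Taylor-type monomials $J_1^{a_1} J_2^{a_2}$. Concretely, one must check that among the resulting pure $x_3$-monomials and $x_2 x_3^\bullet$-monomials, the ones with smallest $x_3$-exponent divide all the others. This ultimately comes down to the identity $d - 2(q+1) = -1$, which ensures that replacing two factors of $x_3^{q+1}$ by one factor of $x_3^d$ \emph{decreases} the $x_3$-exponent by exactly one, so the minima are achieved by maximising $a_2$. Everything downstream is routine bookkeeping on Koszul or Hilbert--Burch resolutions.
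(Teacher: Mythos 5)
Your proposal is correct and follows essentially the same route as the paper: reduce via Lemma~\ref{ideals in tprime} to the explicit monomial ideals $(x_2^2, x_3^{dr})$ and $(x_2^2,\, x_2x_3^{q+d(r-1)},\, x_3^{q+1+d(r-1)})$ in $T$, then read off the weighted regularity from the Koszul resolution in the even case and the Hilbert--Burch resolution in the odd case. The only cosmetic difference is that you spell out the elimination of redundant Taylor generators and the comparison of shifts (using $d_3 > d_2$) slightly more explicitly than the paper does.
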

\begin{proof}
If  $n=2r$,   then by Lemma~\ref{ideals in tprime}, $I_{2r}T + (x_2^2)= ( x_2^2, x_3^{  dr })$ and  hence
\beqn
\reg \left( \f{T }{ I_{2r}T + (x_2^2)} \right) = 2    d_2 + d d_3 r-2 = \f{ d  d_3}{2}n + 2 d_2 -2. 
\eeqn
If $n= 2r-1$,  then by Lemma~\ref{ideals in tprime},
$I_{2r-1}T + (x_2^2) 
=  (I_{2(r-1)  }T)(I_1T) + (x_2^2) =  (x_3^{d(r-1)}) (x_2 x_3^q, x_3^{q+1}) + (x_2^2) 
= (x_2^2, x_2x_3^{ d(r-1) + q },   x_3^{  d(r-1) + q +1} )$.
 By Hilbert-Burch theorem the minimal free resolution  of $I_{2r-1} T+ (x_2^2)$ is of the form
$$
\small
\xymatrix@C=10pt{
0\ar[r]  &  {\begin{array}{c}  T[ - 2d_2 - (d(r-1) + q)d_3 ] \\ \oplus  \\T[ -d_2  - (d( r-1) + q+ 1) d_3]\end{array}}
\ar[rrrrr]^(0.5){  \left( 
\begin{array}{cc}
x_3^{  d(r-1) + q } & 0\\
-x_2           & -x_3\\
0                & x_2
\end{array} \right)}
&&&&& { \begin{array}{c} T[-(2d_2)]  \\ \oplus \\ T[ - (d_2 + ( d(r-1) + q)d_3) ]  \\  \oplus \\ T[- ((d(r-1) + q+ 1) d_3)]\end{array}} 
\ar[r] & T
\ar[r] &  {\displaystyle \f{T}{I_{2r-1} T+ (x_2^2)}}
\ar[r] & 0.
}
$$
\normalsize
Hence
\beqn
\reg \left(\f{T}{I_{2r-1} T+ (x_2^2)} \right) 
=  d_2 + ( d(r-1) + q+1)d_3   -2
=  \f{d  d_3}{2} n + d_2 + \left(\f{  -d}{2} + q+1\right) d_3 -2 . 
\eeqn
\end{proof}

\begin{lemma}
\label{lemma mod x_3}
For all $n \geq 1$, 
\beqn
\reg\left( \f{T}{I_{2n} T+ ( x_3^d) } \right) 
= 2d_2(2n) -2d_2+ d  d_3-2.
\eeqn
\end{lemma}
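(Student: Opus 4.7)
The plan is to reduce $I_{2n}T + (x_3^d)$ to an explicit ideal generated by four monomials, write down its minimal free resolution via Hilbert--Burch, and then read off the weighted regularity from the resolution.

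First, by Lemma~\ref{ideals in tprime} together with the defining expansion $I_{2n}T = \sum_{a_1+2a_2=2n} J_1^{a_1} J_2^{a_2}$, any summand with $a_2 \geq 1$ lies in $(x_3^d)$ because $J_2 = \{x_3^d\}$, so $I_{2n}T + (x_3^d) = J_1^{2n} T + (x_3^d)$. A typical generator of $J_1^{2n}$ has the form $x_2^{2a+b} x_3^{bq+(q+1)c}$ with $a+b+c = 2n$, $a,b,c \geq 0$. Modulo $x_3^{2q+1}$ the only contributing triples $(b,c)$ are $(0,0),(1,0),(2,0),(0,1)$, and the $(2,0)$-monomial $x_2^{4n-2} x_3^{2q}$ is divisible by the $(0,1)$-monomial $x_2^{4n-2} x_3^{q+1}$ since $q+1 \le 2q$. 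This identifies
\[
J_1^{2n} T + (x_3^d) = (A, B, C, D), \quad A=x_2^{4n},\ B=x_2^{4n-1}x_3^q,\ C=x_2^{4n-2}x_3^{q+1},\ D=x_3^{d},
\]
and a quick inspection shows these four monomials are pairwise irredundant.

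Second, because $(A,B,C,D)$ is $\m$-primary in the two-variable ring $T$, its projective dimension is $2$ and Hilbert--Burch applies. The three natural syzygies
\[
x_3^q A - x_2 B = 0,\qquad x_3 B - x_2 C = 0,\qquad x_3^q C - x_2^{4n-2} D = 0
\]
assemble into a $4\times 3$ matrix whose signed maximal minors recover $\pm A, \pm B, \pm C, \pm D$; by Hilbert--Burch these are exactly the minimal syzygies, giving the minimal graded free resolution of $T/(A,B,C,D)$.

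Finally, I record the weighted shifts. The four free summands at homological degree $1$ live in weighted degrees $4n d_2$, $(4n-1)d_2 + q d_3$, $(4n-2) d_2 + (q+1) d_3$, and $d\, d_3$; the three summands at homological degree $2$ live in weighted degrees $4n d_2 + q d_3$, $(4n-1)d_2 + (q+1) d_3$, and $(4n-2) d_2 + d\, d_3$. Using $d_3 - d_2 = m \geq 1$ and $q \geq 1$, one checks these three syzygy shifts are strictly increasing, so the largest is $(4n-2) d_2 + d\, d_3$; comparing $(4n-2) d_2 + d\, d_3 - 2$ against all generator degrees minus $1$ confirms it is the overall maximum, yielding
\[
\reg\bigl(T/(I_{2n} T + (x_3^d))\bigr) = (4n-2) d_2 + d\, d_3 - 2 = 2 d_2 (2n) - 2 d_2 + d\, d_3 - 2.
\]
The main obstacle is the bookkeeping in the reduction step: carefully verifying that exactly the four monomials listed survive as minimal generators modulo $x_3^d$. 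Once that list is fixed, the remainder is a mechanical application of Hilbert--Burch and a comparison of four generator degrees against three syzygy degrees.
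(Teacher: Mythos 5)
Your proposal is correct and follows essentially the same route as the paper: both reduce $I_{2n}T+(x_3^d)$ to the monomial ideal $(x_2^{4n},\, x_2^{4n-1}x_3^{q},\, x_2^{4n-2}x_3^{q+1},\, x_3^{d})$ and then read the weighted regularity off the Hilbert--Burch resolution, whose top syzygy shift $(4n-2)d_2+d\,d_3$ dominates. The only cosmetic difference is that you expand $J_1^{2n}$ directly, whereas the paper first passes through $(I_2T)^n$; the resulting generators, syzygy matrix, and degree comparison are identical.
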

\begin{proof}
By  Lemma~\ref{ideals in tprime} we get
\beqn
    I_{2n} T+ (x_3^d) 
= I_2^n T+ (x_3^d) 
= (x_2^4, x_2^3x_3^q, x_2^2 x_3^{q+1}, x_3^d)^{n} + (x_3^d)
= (x_2^{4n}, x_2^{4n-1}x_3^{q}, x_2^{4n-2} x_3^{q+1}, x_3^d).
\eeqn
By Hilbert-Burch theorem the minimal free resolution of $ I_{2n}T + (x_3^d)$ is 
\small
$$
\xymatrix@C=12pt{
0\ar[r]  &  {\begin{array}{c}  
T[ - (4n-1 )d_2 -(q+1) d_3  ] \\ \oplus  \\T[ -4n d_2 - qd_3]
\\ \oplus \\
T[-(4n-2) d_2 - d  d_3]
\end{array}}
\ar[rrrrr]^(0.5){  \left( 
\begin{array}{ccc}
0 & x_3^q      &0\\
x_3          & -x_2 & 0\\
 -x_2 & 0 &- x_3^q\\
 0& 0& x_2^{4n-2}
\end{array} \right)}
&&&&& { \begin{array}{c} T[-4n d_2]  \\ \oplus \\ T[ -(4n-1) d_2  - qd_3]  \\  \oplus \\ T[- (4n-2)  d_2 - (q+1) d_3]\\
\oplus \\T[ -d  d_3]
\end{array}} 
\ar[r] & T
\ar[r] &  {\displaystyle \f{T}{I_{2n}T + (x_3^d)}}
\ar[r] & 0.
}
$$
\normalsize
This gives $\reg( T/ I_{2n} T+ (x_3^d)) =  (4n-2) d_2 + d d_3 - 2 = 2d_2(2n) -2d_2+ d  d_3-2
$. 
\end{proof}

\begin{proposition}
\label{regularity even}
Let  $n \geq 1$. Then
\beqn
  \reg \left( \f{T}{I_{2n}T }\right) 
&=& \begin{cases} 
(2d_2)(2n) - 2d_2 + d d_3 -2  & \mbox{ if } q=1 \mbox{ and } m=1,\\
 \f{d d_3}{2} (2n) + 2d_2-2& \mbox{ otherwise }    . 
\end{cases}.
\eeqn
\end{proposition}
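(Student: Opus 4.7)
The plan is to compute $\reg(T/I_{2n}T)$ directly from the Hilbert--Burch resolution of the monomial ideal $I_{2n}T = (I_2T)^n$ in the two-variable polynomial ring $T = \kk[x_2,x_3]$ (exploiting Lemma~\ref{ideals in tprime}). This reduces the theorem to two tasks: (i) enumerating the minimal monomial generators of $(I_2T)^n$ explicitly, and (ii) an extremal comparison of the resulting syzygy degrees.

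For (i) I would write $I_2T=(u,v,w,z)$ with $u=x_2^4$, $v=x_2^3x_3^q$, $w=x_2^2x_3^{q+1}$, $z=x_3^d$, so that every monomial of $(I_2T)^n$ is of the form $u^av^bw^cz^e$ with $a+b+c+e=n$ and bidegree $(\alpha,\beta)=(4a+3b+2c,\;qb+(q+1)c+de)$. Setting $k=n-a$ and eliminating $b,c$ using $4a+3b+2c=\alpha$ produces the clean identity $\beta=(q-2)e+(q-1)k+4n-\alpha$, to be minimised over integer $(k,e)$ satisfying $0\le k\le n$, $e\ge 0$, $k+e\ge 2n-\alpha/2$, $k+3e\le 4n-\alpha$ (the last two coming from $b,c\ge 0$). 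A case analysis on $\alpha\pmod 4$ should yield that $I_{2n}T$ has exactly $3n+1$ minimal generators, organised into $n$ blocks indexed by $j=0,1,\dots,n-1$,
\[
(4(n-j),\,dj),\quad (4(n-j)-1,\,q+dj),\quad (4(n-j)-2,\,q+1+dj),
\]
together with the ``top'' generator $(0,dn)$; the candidates with $\alpha\equiv 1\pmod 4$ turn out to be dominated by the block generator at $\alpha-1$.

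Hilbert--Burch then produces a minimal free resolution of $T/I_{2n}T$ with $3n$ first syzygies between consecutive minimal generators, contributing weighted degrees $a_id_2+b_{i+1}d_3$. These split into intra- and inter-block families
\begin{align*}
S_1(j) &= 4(n-j)d_2+(q+dj)d_3,\\
S_2(j) &= (4(n-j)-1)d_2+(q+1+dj)d_3,\\
S_3(j) &= (4(n-j)-2)d_2+d(j+1)d_3\qquad (0\le j\le n-2),
\end{align*}
together with a final syzygy $S_4=2d_2+dn\,d_3$ between $(2,\,q+1+d(n-1))$ and $(0,dn)$. The pivotal computation is $S_k(j+1)-S_k(j)=dd_3-4d_2=d(d-4)+2m(d-2)$ for $k=1,2,3$; this is $-1$ exactly when $q=m=1$ and is strictly positive otherwise, which is precisely the dichotomy in the statement.

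In the generic case $(q,m)\ne(1,1)$ each $S_k$ is increasing in $j$, and short inequalities using $qd_3>d_2$ and $(q+1)d_3>2d_2$ (both immediate from $d_3>d_2$ and $q\ge 1$) show that $S_4$ strictly dominates $S_1(n-1)$, $S_2(n-1)$ and $S_3(n-2)$, giving $\reg(T/I_{2n}T)=S_4-2=\tfrac{dd_3}{2}(2n)+2d_2-2$. In the exceptional case $q=m=1$ each $S_k$ decreases in $j$, the maximum among the $S_k$ is $S_3(0)=(4n-2)d_2+dd_3$, and $S_3(0)-S_4=(n-1)(4d_2-dd_3)=n-1\ge 0$, so $\reg(T/I_{2n}T)=S_3(0)-2=(2d_2)(2n)-2d_2+dd_3-2$ (the cases coincide at $n=1$). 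The main obstacle will be step (i): the careful bookkeeping to confirm the block list is exactly right, in particular that the optimum for each $\alpha\equiv 1\pmod 4$ produces a monomial divisible by the block generator at $\alpha-1$, and that the three generators inside each block are mutually incomparable.
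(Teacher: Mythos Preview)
Your approach is correct and genuinely different from the paper's. The paper never writes down the full minimal generating set of $I_{2n}T$; instead it establishes the colon-ideal identity $(I_{2n}T:x_3^d)=I_{2n-2}T$, feeds this into the short exact sequence
\[
0\longrightarrow \f{T}{I_{2n-2}T}[-dd_3]\xrightarrow{\cdot x_3^d}\f{T}{I_{2n}T}\longrightarrow \f{T}{I_{2n}T+(x_3^d)}\longrightarrow 0,
\]
and unwinds inductively to get $\reg(T/I_{2n}T)=\max_i\bigl\{\reg(T/(I_{2(n-i)}T+(x_3^d)))+idd_3\bigr\}$. Each term in that maximum is computed by a Hilbert--Burch resolution of the \emph{four}-generator ideal $I_{2n}T+(x_3^d)$ (this is Lemma~\ref{lemma mod x_3}), and the final maximisation over $i$ is governed by exactly the same sign $dd_3-4d_2=d(d-4)+2m(d-2)$ that you isolate.

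What each approach buys: the paper's recursion keeps every Hilbert--Burch computation at fixed size four, so no generator enumeration for $(I_2T)^n$ is ever needed---the growth in $n$ is absorbed by the exact-sequence tower. Your route is a single Hilbert--Burch step, which is conceptually cleaner once the $3n+1$ staircase generators are in hand, and it makes the monotonicity of the syzygy degrees (hence the $q=m=1$ exception) completely explicit. The cost is the bookkeeping you flag in step~(i); your parametrisation $\beta=(q-1)k+(q-2)e+4n-\alpha$ handles it, but note the minimisation behaves differently for $q=1$ (coefficient of $e$ is negative) versus $q\ge 2$, so the case analysis on $\alpha\bmod 4$ must be done separately in those two regimes before concluding that the same block list emerges. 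Also remember that for $n=1$ the family $S_3(j)$ is empty, so the exceptional-case maximum is $S_4$ there; you note the two formulas coincide at $n=1$, which covers this.
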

\begin{proof} For all $n \geq 1$, 
\beq \nno
(I_{2n} T : x_3^d) \nno
&=& \sum_{a_1 + 2 a_2 = 2n} ((J_1T)^{a_1} ((J_2T)^{a_2} : x_3^d)\\ \nno
&=& \left( \sum_{a_2=1}^{n} ((J_1T)^{a_1} (J_2T)^{a_2} : x_3^d)\right) + (J_1T)^{2n} : x_3^d)\\ \nno
&\subseteq &  \left( \sum_{a_2=1}^{n} (  (J_1T)^{a_1} (J_2T)^{a_2-1} : x_3^d) \right) + J_1^{2n-2}  
\hspace{.2in} \mbox{[\cite[(3.4)]{clare}]}\\
&\subseteq& I_{2n-2}T.
\eeq
The other inclusion follows from the fact that  $x_3^d I_{2n-2}T \subseteq (J_2T) (I_{2n-2}T) = I_{2n}T $.
Hence we have the exact sequence
$$
\xymatrix@C=20pt{
0 \ar[r] &{\displaystyle \f{T}{I_{2n-2}T } [-d d_3]}
\ar[r]^(0.7){.x_3^d}  & {\displaystyle \f{T}{I_{2n} T}}
\ar[r] & {\displaystyle \f{T}{I_{2n} T+ (x_3^d)}}
\ar[r] & 0\\
}
$$
This implies that 
\beqn
 &&  \reg\left( \f{T}{I_{2n}T}\right)\\
&=& \max\left\{    \reg\left( \f{T}{I_{2n-2} T}\right) + d  d_3 , 
                            \reg\left( \f{T}{I_{2n} T  + (x_3^d) }\right)  \right\}\\
&=& \max\left\{    \reg\left( \f{T}{I_{2n-4} T}\right) + 2d d_3 , 
                           \reg\left( \f{T}{I_{2n-2}  T + (x_3^d) }  \right) + d   d_3 , 
                           \reg\left( \f{T}{I_{2n}T   + (x_3^d) }\right)   \right\}\\
&=& \vdots\\
&=& \max\left\{   \reg\left( \left. \f{T}{I_{2n-2i} T+ (x_3^d)}\right) + di d_3 \right| 
                                                                                      i=0, \ldots, n-1  \right\}\\
&=& \max\left\{ \left. 2d_2(2n-2i)   -2d_2 + d  d_3  -2         + di  d_3\right| i=0, \ldots, n-1  \right\} 
        \hspace{1in} 
        \mbox{[by Lemma~\ref{lemma mod x_3}]}\\
&=& \begin{cases} 
(2d_2)(2n) - 2d_2 + d  d_3 -2  & \mbox{ if } q=1 \mbox{ and } m=1,\\
2d_2(2)   -2d_2 + d  d_3  -2         + d (n-1)  d_3 & \mbox{ otherwise}   
\end{cases}\\
&=& \begin{cases} 
(2d_2)(2n) - 2d_2 + d  d_3 -2  & \mbox{ if } q=1 \mbox{ and } m=1,\\
 \f{d d_3}{2} (2n) + 2d_2-2  & \mbox{ otherwise}   
\end{cases}.
\eeqn
\end{proof}

\begin{proposition}
\label{regularity odd}
 Let  $n \geq 1$. Then
\beqn
  \reg \left( \f{T}{I_{2n+1} T}\right) 
= \begin{cases}
  (2d_2) (2n+1)   -2 d_2 + d   d_3 -2 + 2d_2 & \mbox{ if }  q=1 \mbox{ and } m=1\\
\f{3d_3}{2} (2n+1) + 4d_2 -\f{d  d_3}{2} -2  & \mbox{ if } q=1 \mbox{ and }  m=2 \\
    \f{d  d_3}{2}  (2n+1) +d_2 + \left(\f{  -d}{2} + q+1\right) d_3-2     & \mbox{ if }   q=1  \mbox{ and }  m \geq 3 \mbox{ or } q  \geq 2
\end{cases}.
\eeqn
\end{proposition}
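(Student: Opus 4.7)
The plan is to mirror the proof of Proposition~\ref{regularity even}, using the short exact sequence obtained by multiplication by $x_3^d$. First I would establish $(I_{2n+1}T : x_3^d) = I_{2n-1}T$ for $n \geq 1$. The containment $\supseteq$ is clear, since $x_3^d \cdot I_{2n-1}T \subseteq (J_2T)\, I_{2n-1}T \subseteq I_{2n+1}T$. For $\subseteq$, I would decompose $I_{2n+1}T = \sum_{a_1 + 2a_2 = 2n+1}(J_1T)^{a_1}(J_2T)^{a_2}$, separate the $a_2 = 0$ piece $(J_1T)^{2n+1}$ from the $a_2 \geq 1$ pieces, and apply \cite[(3.4)]{clare} term by term as in the even case. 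This produces the short exact sequence
\[
0 \to (T/I_{2n-1}T)[-dd_3] \xrightarrow{\cdot x_3^d} T/I_{2n+1}T \to T/(I_{2n+1}T + (x_3^d)) \to 0,
\]
and hence the recursion $\reg(T/I_{2n+1}T) = \max\{\reg(T/I_{2n-1}T) + dd_3,\ \reg(T/(I_{2n+1}T + (x_3^d)))\}$.

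Next I would compute the rightmost quotient explicitly. Writing $I_{2n+1}T = (J_1T)(I_{2n}T)$ and using the description of $I_{2n}T$ modulo $x_3^d$ from the proof of Lemma~\ref{lemma mod x_3}, the identity $x_3^d = x_3^{2q+1}$ kills several of the resulting products, leaving the four-generator monomial ideal
\[
I_{2n+1}T + (x_3^d) = (x_2^{4n+2},\ x_2^{4n+1}x_3^q,\ x_2^{4n}x_3^{q+1},\ x_3^d)
\]
for $n \geq 1$. A Hilbert-Burch resolution (as in Lemma~\ref{lemma mod x_3}) yields syzygies of weighted degrees $(4n+2)d_2 + qd_3$, $(4n+1)d_2 + (q+1)d_3$, and $4nd_2 + dd_3$; since $d_2 - qd_3 = d(1-q) + m(1-2q) \leq -m < 0$ for every $q \geq 1$, the third dominates, giving $\reg(T/(I_{2n+1}T + (x_3^d))) = 4nd_2 + dd_3 - 2$.

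Iterating the recursion with base case $\reg(T/I_1T) = d_2 + (q+1)d_3 - 2$ (a direct Hilbert-Burch on the three generators of $J_1T$), I would conclude
\[
\reg(T/I_{2n+1}T) = \max\{C_0, C_1, \ldots, C_n\},
\]
where $C_0 := d_2 + (q+1)d_3 - 2 + n\,dd_3$ and $C_i := 4id_2 + (n-i+1)\,dd_3 - 2$ for $1 \leq i \leq n$.

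The main obstacle is the ensuing case analysis. The relevant differences $C_{i+1} - C_i = 4d_2 - dd_3$, $dd_3 - 4d_2 = d(d-4) + 2m(d-2)$, and $C_0 - C_1 = (q+1)d_3 - 3d_2 = d(q-2) + m(2q-1)$ show that $\{C_i\}_{i \geq 1}$ is increasing precisely when $q = m = 1$ (yielding $\max = C_n = 4nd_2 + dd_3 - 2$, i.e.\ the first formula), strictly decreasing with $C_1 > C_0$ precisely when $q = 1,\ m = 2$ (yielding $\max = C_1 = 4d_2 + n\,dd_3 - 2$, i.e.\ the second formula), and decreasing with $C_0 \geq C_1$ in all remaining cases $q \geq 2$ or ($q = 1,\ m \geq 3$) (yielding $\max = C_0$, i.e.\ the third formula). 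The borderline $q = 1,\ m = 3$ satisfies $C_0 = C_1$, so it is consistently absorbed into the third case. Rewriting $C_1$ and $C_0$ in the form involving $\tfrac{dd_3}{2}(2n+1)$ recovers the stated closed forms.
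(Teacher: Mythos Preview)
Your proof is correct but takes a genuinely different route from the paper. The paper uses the short exact sequence induced by multiplication by $x_2^2$,
\[
0 \to (T/I_{2n}T)[-2d_2] \xrightarrow{\;\cdot x_2^2\;} T/I_{2n+1}T \to T/(I_{2n+1}T + (x_2^2)) \to 0,
\]
whose exactness is supplied by \cite[Theorem~3.1]{clare}. This reduces the computation in a single step to the already-established Proposition~\ref{regularity even} (for the left term) and Corollary~\ref{regularity modulo x_2^2} (for the right term), so only a two-term maximum and a short case analysis remain. Your approach instead mirrors the proof of Proposition~\ref{regularity even} itself: you iterate a recursion via multiplication by $x_3^d$ all the way down to $I_1T$, which requires a fresh computation of $I_{2n+1}T + (x_3^d)$ and its Hilbert--Burch resolution (the odd-index analogue of Lemma~\ref{lemma mod x_3}), followed by an $(n+1)$-term maximum and a slightly longer comparison of the $C_i$. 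The paper's route is more economical because it recycles the even-case result; yours is self-contained for the odd parity and shows that both parities can be handled uniformly by the $x_3^d$ device, at the cost of redoing work already absorbed into Proposition~\ref{regularity even}.

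One small remark: your value $C_n = 4nd_2 + dd_3 - 2$ in the case $q=m=1$ agrees with the formula obtained in the paper's \emph{proof} and with Theorem~\ref{final regularity}; the trailing ``$+\,2d_2$'' in the displayed statement of Proposition~\ref{regularity odd} is a typo.
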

\begin{proof}
 For all $n \geq 1$, the sequence
\normalsize
$$
\xymatrix@C=20pt{
0 \ar[r] &{\displaystyle \f{T}{I_{2n}T}[-2d_2] }
\ar[r]^{.x_2^2}  & {\displaystyle \f{T}{I_{2n+1} T}}
\ar[r] & {\displaystyle \f{T}{I_{2n+1}T + (x_2^2)}}
\ar[r] & 0\\
}
$$
is exact by \cite[Theorem~3.1]{clare}. Hence
\beqn
\label{regularity odd and even}
 &&    \reg\left( \f{T}{I_{2n+1} T}\right) \\
&= &  \max\left\{    \reg\left( \f{T}{I_{2n} T}\right) + 2 d_2 , 
         \reg\left( \f{T}{I_{2n+1}   T+ (x_2^2) }\right)  \right\}\\
&=&  \begin{cases}
         \max\left\{  (2d_2) (2n)   -2 d_2 + d   d_3 -2 + 2 d_2,
         {d d_3} n +d_2 + ( q+1) d_3 -2    \right\}
&       \mbox{ if }  q=1 \mbox{ and } m=1\\
         \max\left\{ d d_3 (n) + 2d_2  -2 + 2 d_2,
         {d  d_3}  (n) +d_2 + ( q+1) d_3-2     \right\}   
&       \mbox{ otherwise } 
         \end{cases} \\
&&    \hspace{4in} \mbox{[Proposition~\ref{regularity even},  Corollary~\ref{regularity modulo x_2^2}]}\\
&=&  \begin{cases}
        (2d_2) (2n+1)   -2 d_2 + d d_3 -2 
&     \mbox{ if }  q=1 \mbox{ and } m=1\\
       \f{d  d_3}{2} (2n+1) + 4d_2 -\f{d d_3}{2} -2 
&     \mbox{ if } q=1 \mbox{ and }  m=2 \\
       \f{d d_3}{2}  (2n+1) +d_2 + \left(\f{  -d}{2} + q+1\right) d_3-2     
&    \mbox{ if }   q=1  \mbox{ and }  m \geq 3 \mbox{  or } q  \geq 2
\end{cases}.
\eeqn  
\end{proof}

\begin{theorem}
\label{final regularity}
\been
\item
\label{final regularity one}
 $\reg (R/ \p)= d_2 + (q+1) d_3-2$. 

\item 
\label{final regularity two}
Let $n \geq 2$. 
\been
\item If $q=1$ and $m=1$, then 
$
{\displaystyle \reg (R/ \p^{(n)}) 
= (2d_2)n -2d_2 + d  d_3-2}$.

\item
If $q=1$ and  $m=2$, then  ${
\displaystyle 
  \reg \left( \f{R}{ \p^{(n) } } \right) 
= \begin{cases}
   \f{d  d_3}{2}n + 4d_2 -\f{d  d_3}{2} -2& \mbox{ if  $n$ is odd},\\
   \f{d  d_3}{2}n   + 2d_2-2  & \mbox{ if  $n$ is even}   . 
\end{cases}
}
$

\item
If $q=1$ and $m \geq 3$ or $q \geq 2$, then  $${\displaystyle 
  \reg \left( \f{R}{ \p^{(n) } } \right) 
= \begin{cases}
    \f{d  d_3}{2}  n +d_2 + \left(\f{  -d}{2} + q+1\right) d_3-2    & \mbox{ if  $n$ is odd}\\
  \f{d  d_3}{2}n   + 2d_2-2 & \mbox{ if  $n$ is even}  . 
\end{cases}
}$$
\eeen
\eeen
\end{theorem}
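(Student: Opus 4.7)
The overall strategy is to invoke Lemma~\ref{reg comparision}, which reduces the computation of $\reg(R/\p^{(n)})$ to that of $\reg(T/I_n T)$, and then to read off the answers from the formulas already established in Corollary~\ref{regularity modulo x_2^2}, Proposition~\ref{regularity even}, and Proposition~\ref{regularity odd}. All the genuine work (Hilbert--Burch resolutions and short exact sequence arguments) has been carried out in these preliminary results, so what remains is bookkeeping.

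For part~(\ref{final regularity one}), I apply Lemma~\ref{reg comparision} with $n = 1$ to obtain $\reg(R/\p) = \reg(T/I_1 T)$. By the definition (\ref{equation of In}) with $a_1 = 1$, $a_2 = 0$, we have $I_1 T = (x_2^2, x_2 x_3^q, x_3^{q+1})$, which trivially contains $x_2^2$, so $I_1 T = I_1 T + (x_2^2)$. Corollary~\ref{regularity modulo x_2^2} then applies with $r = 1$ (i.e.\ $n = 2r - 1 = 1$) and yields
\[
\reg(R/\p) \;=\; \f{d d_3}{2} + d_2 + \left(\f{-d}{2} + q + 1\right) d_3 - 2 \;=\; d_2 + (q+1) d_3 - 2,
\]
which is the stated formula.

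For part~(\ref{final regularity two}), fix $n \geq 2$ and again apply Lemma~\ref{reg comparision}. If $n$ is even, write $n = 2n'$ with $n' \geq 1$; Proposition~\ref{regularity even} supplies $\reg(T/I_{2n'}T)$, whose two subcases ($q = m = 1$ versus otherwise) match the even-$n$ rows of (a), (b), and (c) after substituting $n = 2n'$. If $n \geq 3$ is odd, write $n = 2n' + 1$ with $n' \geq 1$; Proposition~\ref{regularity odd} supplies $\reg(T/I_{2n'+1}T)$ in the three subcases ($q = m = 1$; $q = 1$ and $m = 2$; $q = 1$ and $m \geq 3$, or $q \geq 2$), which align exactly with the odd-$n$ rows of (a), (b), and (c). The only step left is the routine algebraic simplification needed to confirm these matchings (for instance, in case (a) one checks that $(2d_2)(2n'+1) - 2d_2 + d d_3 - 2 + 2d_2 = (2d_2)n - 2d_2 + d d_3 - 2$ after setting $n = 2n'+1$). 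Consequently there is no substantive obstacle beyond this verification, since the real computational content already sits in Propositions~\ref{regularity even} and~\ref{regularity odd}.
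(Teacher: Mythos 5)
Your proposal follows exactly the same route as the paper: Lemma~\ref{reg comparision} reduces everything to $\reg(T/I_nT)$, and the formulas are then read off from Corollary~\ref{regularity modulo x_2^2} (for $n=1$, using $x_2^2\in I_1T$ so that $I_1T+(x_2^2)=I_1T$ --- a detail the paper leaves implicit and which you supply correctly), Proposition~\ref{regularity even} (for $n$ even), and Proposition~\ref{regularity odd} (for odd $n\geq 3$). The cases (b), (c) and all even cases do match after the substitution $n=2n'$ or $n=2n'+1$, as you claim.

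However, the one ``routine'' identity you actually write down, namely $(2d_2)(2n'+1)-2d_2+d d_3-2+2d_2=(2d_2)n-2d_2+d d_3-2$ with $n=2n'+1$, is false: the left side equals $(2d_2)n+d d_3-2$, which exceeds the right side by $2d_2$. The source of the mismatch is that the displayed statement of Proposition~\ref{regularity odd} in the case $q=1$, $m=1$ carries a spurious trailing $+2d_2$; the final display of its own proof gives $(2d_2)(2n+1)-2d_2+d d_3-2$ (for $q=m=1$ one has $d=3$, $d_2=4$, $d_3=5$, and the maximum computed there is $\max\{16n+5+2d_2,\,15n+12\}=16n+13=(2d_2)(2n+1)-2d_2+d d_3-2$), and it is this value that matches Theorem~\ref{final regularity}(2)(a). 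So the theorem and your overall strategy are fine, but as written your verification in case (a) does not close: you need to invoke the formula actually established in the proof of Proposition~\ref{regularity odd} rather than the one printed in its statement, or equivalently note and correct that typo before the matching goes through.
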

\begin{proof}
By Lemma~\ref{reg comparision},   $\reg (R/ \p^{(n)} )= \reg (T/  I_nT) $. 
Hence  (\ref{final regularity one})  follows from Corollary~\ref{regularity modulo x_2^2} and 
(\ref{final regularity two}) follows from Proposition~\ref{regularity even} and Proposition~\ref{regularity odd}. 
\end{proof}

We end this paper with the following remark.
\begin{remark}
From our computations one can verify that for all $q \geq 1$ and $m \geq 1$,
\beqn
\f{\alpha(\p)}{ \gamma(\p)} \leq \rho(\p) \leq \f{\reg(\p)}{\gamma(\p)}. 
\eeqn
It follows that  Theorem~1.2.1 of \cite{bocci-harbourne} holds true for $\p$.
\end{remark}

\end{document}